\documentclass{amsart}
\usepackage{changebar}
\usepackage{lineno}
\usepackage{amsthm}
\usepackage{verbatim}
\usepackage[utf8]{inputenc}
\usepackage{blindtext}
\usepackage{bm}

\makeatletter
\renewcommand\part{%
   \if@noskipsec \leavevmode \fi
   \par
   \addvspace{4ex}%
   \@afterindentfalse
   \secdef\@part\@spart}

\def\@part[#1]#2{%
    \ifnum \c@secnumdepth >\m@ne
      \refstepcounter{part}%
      \addcontentsline{toc}{part}{\thepart\hspace{1em}#1}%
    \else
      \addcontentsline{toc}{part}{#1}%
    \fi
    {\parindent \z@ \raggedright
     \interlinepenalty \@M
     \normalfont
     \ifnum \c@secnumdepth >\m@ne
       \Large\bfseries \partname\nobreakspace\thepart
       \par\nobreak
     \fi
     \huge \bfseries #2%
     \par}%
    \nobreak
    \vskip 3ex
    \@afterheading}
\def\@spart#1{%
    {\parindent \z@ \raggedright
     \interlinepenalty \@M
     \normalfont
     \huge \bfseries #1\par}%
     \nobreak
     \vskip 3ex
     \@afterheading}
\makeatother



\newtheorem{lemma}{Lemma}
\newtheorem{theorem}{Theorem}

\newcommand{\nmid}{\not \hspace{0.25em} \mid}

\begin{document}

\title[The diophantine equation $\left(2^{k}-1\right)\left(3^{k}-1\right)=x^{n} $  ] {The diophantine equation $\left(2^{k}-1\right)\left(3^{k}-1\right)=x^{n} $ } 

\author{Bo He} 
\address[Bo He]{1. Mathematisches Institut der Universit\"at 
G\"ottingen, Bunsenstr. 3-5, DE-37073, G\"ottingen, Germany; 2. Applied Mathematics insititute of Aba Teachers University, Wenchuan, Sichuan, 623002, P. R. China}
\email[Bo He]{bo.he01@stud.uni-goettingen.de; bhe@live.cn} 
\author{Chang Liu} 
\address[Chang Liu]{Mathematisches Institut der Universit\"at 
G\"ottingen, Bunsenstr. 3-5, DE-37073, G\"ottingen, Germany}
\email[Chang Liu]{chang.liu@mathematik.uni-goettingen.de} 

\date{ \today}
\vspace{0.3cm}
\begin{abstract}
In this paper, we investigate the Diophantine equation 
\[
(2^k - 1)(3^k - 1) = x^n
\]
and prove that it has no solution in positive integers $k, x, n > 2$.
\end{abstract}

\maketitle
\vspace*{-0.5cm}

\section{Introduction}
In 2000, Szalay \cite{1} studied the Diophantine equation
\[
(2^n - 1)(3^n - 1) = x^2
\]
and proved that it has no solution in positive integers \( n \) and \( x \). Additionally, he studied similar equations such as \( (2^n - 1)(5^n - 1) = x^2 \), demonstrating that they have only limited solutions with specific values of \( n \). In
the same year, Hajdu and Szalay \cite{2} showned that there is no solution for \((2^n - 1)(6^n - 1) = x^2\).

In 2001, Cohn's \cite{3} work focused on the Diophantine equation
$$
(a^n - 1)(b^n - 1) = x^2
$$
and explored the integer solutions for given values of \(a\) and \(b\), presenting general results and conjectured about the solvability of the equation for specific cases.

In 2002, Luca and Walsh \cite{4} applied a computational approach to solve completely the Cohn-type equations
    \[
    (a^k - 1)(b^k - 1) = x^2
    \]
    for nearly all pairs \((a, b)\) satisfying \(2 \leq b < a \leq 100\), leaving only 70 unresolved cases, without additional conditions. This significantly extends previous works on such equations. Furthermore, under the assumption of the ABC conjecture, they showed that a much stronger results can be derived, including the finiteness of solutions for equations like
    \[
    (x^m - 1)(y^n - 1) = z^2.
    \]

In 2007, Bennett \cite{5} investigated the Diophantine equation of the form
\[
(x^k - 1)(y^k - 1) = (z^k - 1)^t
\]
and focused on proving the finiteness of integer solutions, especially when \( t = 1 \) or \( t = 2 \) and \( k \) is sufficiently large. His approach involved using the hypergeometric method of Thue and Siegel, along with various gap principles, to show that there are no positive integer solutions for specific cases where \( z > 1 \) and \( k > 3 \).

\newpage

In this paper, we will prove the following theorem.

\begin{theorem}
The equation
\begin{equation}
\label{111}
\left(2^{k}-1\right)\left(3^{k}-1\right)=x^{n}
\end{equation}
has no solution in positive integers \( k, x, n>2\).
\end{theorem}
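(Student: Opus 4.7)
The plan is to reduce to $n$ an odd prime, kill the odd-$k$ case by a quick $2$-adic computation, and then combine local valuations of $2^k-1$ and $3^k-1$ at small primes with a Zsygmondy-type or linear-forms estimate. By Szalay's $n=2$ theorem \cite{1}, any even $n$ is handled via $x^n=(x^{n/2})^2$, so we may take $n$ odd; replacing $n$ by any of its odd prime divisors further reduces to $n$ an odd prime with $n\geq 3$. When $k$ is odd, $v_2(2^k-1)=0$ and $v_2(3^k-1)=v_2(3-1)=1$, so $v_2\bigl((2^k-1)(3^k-1)\bigr)=1$, which cannot be divisible by $n\geq 3$ --- contradiction. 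Hence $k$ is even; write $k=2m$.

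The next step is a local analysis at $2$ and $3$. Lifting-the-Exponent yields
\[
v_2\bigl((2^k-1)(3^k-1)\bigr) = 2 + v_2(k), \qquad v_3\bigl((2^k-1)(3^k-1)\bigr) = 1 + v_3(k).
\]
Divisibility by $n$ gives $n \mid 2 + v_2(k)$ and $n \mid 1 + v_3(k)$, so $v_3(k)\geq n-1\geq 2$ and hence $9\mid k$; combined with $2\mid k$, this forces $18\mid k$. Since $6\mid k$, the prime $7$ is now accessible: $\mathrm{ord}_7(2)=3$ and $\mathrm{ord}_7(3)=6$ both divide $k$, and LTE gives $v_7\bigl((2^k-1)(3^k-1)\bigr) = 2(1+v_7(k))$, so $n\mid 1+v_7(k)$ (as $n$ is odd), which pins down $49\mid k$. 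Iterating the same bookkeeping at further primes $p$ with $\mathrm{lcm}\bigl(\mathrm{ord}_p(2),\mathrm{ord}_p(3)\bigr)\mid k$ accumulates more divisibility constraints on $k$.

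The main obstacle is converting this growing list of congruence constraints into a contradiction. One route is a primitive prime divisor (Zsygmondy) argument: for $k\geq 7$, $2^k-1$ admits a primitive prime $p$ with $\mathrm{ord}_p(2)=k$, and typically $v_p(2^k-1)=1$; if additionally $p\nmid 3^k-1$, then $v_p(x^n)=1$, contradicting $n\geq 3$. The delicate point is ruling out the coincidence $\mathrm{ord}_p(3)\mid k$, which requires feeding back the structural information already extracted on $k$ into Bang--Zsygmondy. An alternative is a size estimate: from $(2^k-1)(3^k-1) = 6^k - 2^k - 3^k + 1$ one gets $|x^n - 6^k| < 2\cdot 3^k$, producing a tiny linear form $|n\log x - k\log 6| \ll 2^{-k}$; Matveev's lower bound on linear forms in logarithms then yields an explicit upper bound on $k$, leaving only a finite computational check. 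The hard part in either approach is the handling of primes that divide both $2^k-1$ and $3^k-1$ --- precisely the scenario forced by $18\mid k$ --- where the naive Zsygmondy argument alone does not close the problem and the fine structure of $\gcd(2^k-1,3^k-1)$ must be used.
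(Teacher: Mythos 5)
Your reduction to an odd prime exponent $n=q$, the parity argument forcing $k$ even, and the LTE bookkeeping at $p=2,3,7$ (giving $q\mid 2+\nu_2(k)$, $q\mid 1+\nu_3(k)$, $q\mid 1+\nu_7(k)$) all match the paper's Lemma 2 and are correct as far as they go. But the proof stops exactly where the real work begins: you name two possible endgames (Zsygmondy/primitive divisors, or linear forms in logarithms) and you yourself concede that neither closes the argument --- the primitive-divisor route founders on controlling $\nu_p(2^k-1)$ and on the possibility $\mathrm{ord}_p(3)\mid k$, and the Matveev route is only asserted to yield ``an explicit upper bound on $k$'' with an unspecified and potentially enormous finite check. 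As written, there is no complete chain of reasoning from the local constraints to a contradiction, so this is a genuine gap rather than a stylistic difference.

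The paper closes the gap with two ideas you are missing. First, it propagates the divisibility to \emph{all} primes $p\le q$, not just $2,3,5,7$: for $p\ge 11$ with $(p-1)\mid k$, LTE gives $\nu_p\bigl((2^k-1)(3^k-1)\bigr)=\nu_p(2^{p-1}-1)+\nu_p(3^{p-1}-1)+2\nu_p(k)$, and the trivial size bound $p^{\nu_p(a^{p-1}-1)}\le a^{p-1}$ yields $\nu_p(2^{p-1}-1)+\nu_p(3^{p-1}-1)<\log 6\cdot\frac{p-1}{\log p}$; comparing with $\nu_p(y^q)\ge q$ forces $\nu_p(k)$ to be large, in particular $p\mid k$. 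An induction over the primes up to $q$ (checking that $\nu_r(p-1)\le\nu_r(k)$ for every smaller prime $r$, so that $(p-1)\mid k$ is available at each step) then gives the crucial conclusion $q\mid k$. Second, once $q\mid k$, setting $X=2^{k/q}$, $Y=3^{k/q}$, $Z=y$ turns the equation into $(X^q-1)(Y^q-1)=Z^q$ with $XY=Z+t$ for some integer $t\ge 1$; expanding $(Z+t)^q$ and using $a<b<a^2$ for $a=2^k-1$, $b=3^k-1$ shows the leading term $q(ab)^{(q-1)/q}t\ge 3(ab)^{2/3}>3b$ already exceeds $a+b+1$, a contradiction. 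This is entirely elementary and avoids both Zsygmondy and transcendence theory. If you want to salvage your draft, the step to supply is precisely the passage from ``$2,3,7$ divide $k$ to high powers'' to ``$q\mid k$'', together with an elementary exploitation of that fact.
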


To facilitate our analysis, we can rewrite this equation as
\begin{equation}
\label{theorem}
\left(2^{k}-1\right)\left(3^{k}-1\right)=y^{q},
\end{equation}
where \( k, y\) are positive integers, and \( q \) is a odd prime number. Here $q$ denotes the least prime divisor of $n$ and $y=x^{n/q}$.       
It is noteworthy that the case of $q=2$ has been resolved by Szalay's work \cite{1}. Consequently, we only need to consider exclusively $q$ is an odd prime. 
\section{Proofs}
Let \( p \) be a prime and \( n \) a nonzero integer. The \( p \)-adic valuation, denoted by \( \nu_p(n) \), is defined as the exponent of \( p \) in the prime factorization of \( n \). The following result, known as the Lifting-the-Exponent (LTE) Lemma, is a fundamental tool in analyzing the \( p \)-adic properties of exponential expressions. 

\begin{lemma}[Lifting-the-exponent Lemma \cite{6}]
\label{Lifting-the-exponent Lemma}
Let \( p \) be a prime, and let \( a \) and \( b \) be integers such that \( k \) is a positive integer. Suppose \( p \mid (a - b) \) and \( p \nmid ab \). Then, the \( p \)-adic valuation \( \nu_p \) of \( a^k - b^k \) is given by
\[
\nu_p(a^k - b^k) = 
\begin{cases}
\nu_p(a - b) + \nu_p(k), & \text{if } p \text{ is odd}, \\
\nu_2(a - b) , & \text{if } p = 2 \text{ and } k \text{ is odd}, \\
\nu_2(a^2 - b^2) + \nu_2\left(\frac{k}{2}\right), & \text{if } p = 2 \text{ and } k \text{ is even}.
\end{cases}
\]
\end{lemma}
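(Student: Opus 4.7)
The plan is to reduce all three cases to two elementary facts about the cofactor
\[
S_k(a,b) \;=\; \sum_{i=0}^{k-1} a^{k-1-i} b^{i}
\]
appearing in the identity $a^k - b^k = (a-b)\,S_k(a,b)$: first, that $S_\ell(A,B)$ is a $p$-adic unit whenever $\gcd(\ell,p)=1$ and $p \mid A-B$, $p \nmid AB$; and second, when $p$ is odd, that $\nu_p\bigl(S_p(a,b)\bigr) = 1$ under the lemma's hypotheses. Once both are in hand, the first two bullets of the lemma will follow by writing $k = p^m \ell$ with $\gcd(\ell,p)=1$ and iterating; the third bullet will be handled by an explicit telescoping factorization.

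The first fact is essentially immediate. Reducing each summand of $S_\ell(A,B)$ modulo $p$ under $A \equiv B \pmod p$ gives $S_\ell(A,B) \equiv \ell B^{\ell-1} \pmod p$, which is a unit because $p \nmid \ell B$. Applied directly to $(a,b,k)$ when $\gcd(k,p)=1$, this already settles the second bullet (the case $p=2$, $k$ odd) and also handles the ``coprime part'' of $k$ in the other bullets.

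The second fact is the heart of the proof. For $p$ odd and $k=p$, I will set $a = b + pt$ and use the binomial theorem to get the second-order expansion $a^i b^{p-1-i} \equiv b^{p-1} + i p t\, b^{p-2} \pmod{p^2}$ for each $i$. Summing over $0 \le i \le p-1$ yields
\[
S_p(a,b) \;\equiv\; p\,b^{p-1} + p t\, b^{p-2} \binom{p}{2} \pmod{p^2},
\]
and the crucial observation is that $\binom{p}{2} = p(p-1)/2$ is divisible by $p$ precisely because $p$ is odd; hence $\nu_p\bigl(S_p(a,b)\bigr) = 1$. Iterating this gives $\nu_p(a^{p^m} - b^{p^m}) = \nu_p(a-b) + m$ for every $m \ge 0$, and combining with the first fact (applied to $(a^{p^m}, b^{p^m}, \ell)$) proves the first bullet in full generality.

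For the third bullet ($p=2$, $k$ even) the argument of the previous step collapses because $\binom{2}{2} = 1$ is not divisible by $2$, so a different route is needed. Writing $k = 2^m \ell$ with $\ell$ odd and $m \ge 1$, the first fact reduces the problem to computing $\nu_2(a^{2^m} - b^{2^m})$, and I will telescope this as
\[
a^{2^m} - b^{2^m} \;=\; (a-b)(a+b)\prod_{j=1}^{m-1}\bigl(a^{2^j} + b^{2^j}\bigr).
\]
For $j \ge 1$ and $a,b$ odd, $a^{2^j}$ and $b^{2^j}$ are squares of odd numbers, hence both $\equiv 1 \pmod 8$, so each factor $a^{2^j} + b^{2^j}$ is $\equiv 2 \pmod 4$ and contributes exactly $1$ to $\nu_2$. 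Using $\nu_2(a-b) + \nu_2(a+b) = \nu_2(a^2-b^2)$ and $\nu_2(k/2) = m-1$, this gives the claimed formula. The main obstacle is the second-order congruence underlying the odd-$p$ case: one must extract an extra factor of $p$ from the cofactor $S_p(a,b)$, and doing so rests essentially on $\binom{p}{2} \equiv 0 \pmod p$ — the very identity that fails at $p=2$ and forces the even-$k$ case to be treated separately by the telescoping factorization.
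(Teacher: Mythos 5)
Your proof is correct and complete: the reduction of each case to the cofactor $S_k(a,b)$, the unit computation $S_\ell(A,B)\equiv \ell B^{\ell-1}\pmod p$ for $\gcd(\ell,p)=1$, the second-order expansion showing $\nu_p(S_p(a,b))=1$ for odd $p$ via $p\mid\binom{p}{2}$, and the telescoping factorization for $p=2$, $k$ even are all sound, and you correctly isolate why the even case needs separate treatment. Note, however, that the paper does not prove this lemma at all --- it is quoted as a known result with a citation to the LTE reference [6] --- so there is no in-paper argument to compare against; what you have written is the standard self-contained proof of the Lifting-the-Exponent Lemma and could serve as a replacement for the citation.
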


\begin{lemma}
\label{lemma k}
  Assume that $(k, y, q)$ is a positive integer solution where $q$ is a prime of equation \eqref{theorem}. We have

\begin{enumerate}
  \item $\nu_{2}(k) \geq q-2$,
  \item $\nu_{p}(k) \geq q-1$ for $p \le q$ with $p\in \{3, 5, 7\}$. 
\end{enumerate}  
\end{lemma}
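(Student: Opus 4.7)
My plan is to extract both bounds from $p$-adic valuations of the equation $(2^k-1)(3^k-1)=y^q$, computing $\nu_p$ of each factor separately via the LTE Lemma. Since the right-hand side is a perfect $q$-th power, $\nu_p$ of the product on the left must be a multiple of $q$, and this congruence pins down $\nu_p(k)$ modulo $q$, from which the stated lower bound follows because $\nu_p(k)\geq 0$.

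For part (1), since $2^k-1$ is odd, only $3^k-1$ contributes to $\nu_2$ of the left-hand side. If $k$ were odd, LTE would give $\nu_2(3^k-1)=\nu_2(3-1)=1$, which cannot be a positive multiple of the odd prime $q\geq 3$; hence $k$ must be even, and the even case of LTE yields $\nu_2(3^k-1)=\nu_2(3^2-1)+\nu_2(k/2)=2+\nu_2(k)$. Imposing $q\mid 2+\nu_2(k)$ then forces $\nu_2(k)\geq q-2$.

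For part (2), I would treat $p=3,5,7$ in order, using each previously obtained divisibility of $k$ to meet the hypotheses of LTE. The common pattern is this: let $d$ be the multiplicative order of $2$ (resp.\ $3$) modulo $p$; once $d\mid k$ has been secured, LTE applied to $a^k-1=(a^d)^{k/d}-1$ gives $\nu_p(a^k-1)=\nu_p(a^d-1)+\nu_p(k/d)=\nu_p(a^d-1)+\nu_p(k)$, since $\gcd(d,p)=1$ in all three cases. Concretely: for $p=3$, I use $2\mid k$ from part (1), observe that $\nu_3(3^k-1)=0$, and obtain $\nu_3$ of the left-hand side equal to $1+\nu_3(k)$. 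For $p=5$ (so $q\geq 5$), part (1) supplies $\nu_2(k)\geq 3$ and in particular $4\mid k$; both orders equal $4$, and the two factors contribute $1+\nu_5(k)$ each, totalling $2+2\nu_5(k)$. For $p=7$ (so $q\geq 7$), the $p=3$ bound gives $\nu_3(k)\geq 6$, which together with $k$ even yields $6\mid k$; again both factors contribute $1+\nu_7(k)$ each. In every case, since $q$ is odd, divisibility by $q$ cancels the leading factor of $2$ (when it occurs) and forces $1+\nu_p(k)\equiv 0\pmod q$, i.e., $\nu_p(k)\geq q-1$.

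The only real subtlety is a bookkeeping one: each LTE application for $p\in\{5,7\}$ relies on a divisibility of $k$ established at an earlier step, so the four cases must be handled strictly in the order $p=2,3,5,7$. Once that dependency is respected, every valuation computation is a direct application of Lemma~\ref{Lifting-the-exponent Lemma}, and no further ingredients are needed.
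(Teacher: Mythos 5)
Your proof is correct and follows essentially the same route as the paper's: establish $2\mid k$ (hence the even case of LTE for $p=2$), then handle $p=3,5,7$ in order, each time using the previously obtained divisibility of $k$ to reduce to a base where LTE applies, and extract the bound from $q$ dividing the resulting valuation $1+\nu_p(k)$ (or $2(1+\nu_p(k))$, using that $q$ is odd). Your explicit bookkeeping via the multiplicative order $d$ and the identity $a^k-1=(a^d)^{k/d}-1$ is a slightly cleaner presentation of what the paper leaves implicit, but it is the same argument.
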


\begin{proof}

For point (1), since $2 \mid (3^k - 1)$, it follows that  $2 \mid y$. Thus, we have
    \[
    3^k - 1 \equiv 0 \pmod{2^q},
    \]
    which implies that \( k \) must be even. By applying Lemma~\ref{Lifting-the-exponent Lemma}, we obtain
    \[
    q \leq \nu_2\left(y^q\right) = \nu_2\left(3^k - 1\right) = \nu_2\left(3^2 - 1\right) + \nu_2\left(\frac{k}{2}\right).
    \]
    Therefore, it follows that
    \begin{equation}\label{eq:2div}
    2^{q - 2} \mid k.
    \end{equation}

When $p=3$ in point (2), the condition \( 2^k - 1 \equiv 0 \pmod{3} \) implies that  $3 \mid y$. Thus, we have
    \[
    2^k - 1 \equiv 0 \pmod{3^q}.
    \]
    Applying Lemma~\ref{Lifting-the-exponent Lemma} again, we obtain
    \begin{equation}\label{eq:3div}
    3^{q - 1} \mid  k.
    \end{equation}
    
When $p=5$, assume that \( q \geq 5 \). Given that $4 \mid k$ from \eqref{eq:2div}, we have
    \[
    2^k - 1 \equiv 3^k - 1 \equiv 0 \pmod{5}.
    \]
    Additionally, by Lemma~\ref{Lifting-the-exponent Lemma}, we get
    \[
    \nu_5\left(2^k - 1\right) = \nu_5\left(3^k - 1\right) = 1 + \nu_5(k).
    \]
    Since \( q \) is odd, if \( q \) divides \( 2\left(1 + \nu_5(k)\right) \), it must also divide \( 1 + \nu_5(k) \). Thus, we conclude
    \[
    5^{q - 1} \mid k.
    \]

Finally, in the case $p=7$, from \eqref{eq:2div} and \eqref{eq:3div} we know that $6|k$. Then we have 
  \[
   2^k - 1 \equiv 3^k - 1 \equiv 0 \pmod{7}.
    \]
By Lemma~\ref{Lifting-the-exponent Lemma}, we get 
\[
   \nu_7\left(2^k - 1\right) = \nu_7 \left(3^k - 1\right) = 1 + \nu_7(k).
\]
The fact $q \mid 2(1+\nu_7(k))$ implies $q \mid (1+\nu_7(k))$. This completes the proof.  
\end{proof}

The following lemma addresses the divisibility properties of \( k \) for larger prime numbers.

\begin{lemma}
\label{key lemma}
Assume that \( (k, y, q) \) is a positive integer solution of equation \eqref{theorem}. Let $p$ be a prime such that \( p\ge 11\). If \( (p - 1)\mid k \), then
$$
\nu_{p}(k)>\frac{1}{2}\left(q- \frac{\log 6}{\log p}\cdot (p-1)\right).
$$    
In particular, if we further assume that $p\le q$, then $p \mid k$.  
\end{lemma}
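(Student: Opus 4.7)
The core idea is to apply LTE to both factors $2^k-1$ and $3^k-1$ at the prime $p$, then use the fact that the total $p$-adic valuation of the product must be divisible by $q$, and finally bound the LTE ``base terms'' $\nu_p(2^{p-1}-1)$ and $\nu_p(3^{p-1}-1)$ by the trivial size estimate $p^{\nu_p(N)}\le N$.

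\textbf{Step 1: Apply LTE.} Since $(p-1)\mid k$ and $p\ge 11$ is coprime to $6$, Fermat's little theorem gives $p\mid 2^{p-1}-1$ and $p\mid 3^{p-1}-1$. Writing $2^k-1 = (2^{p-1})^{k/(p-1)} - 1^{k/(p-1)}$ and applying Lemma~\ref{Lifting-the-exponent Lemma} (with $a = 2^{p-1}$, $b=1$, exponent $k/(p-1)$), I get
\[
\nu_p(2^k-1) = \nu_p(2^{p-1}-1) + \nu_p\!\left(\tfrac{k}{p-1}\right) = \nu_p(2^{p-1}-1) + \nu_p(k),
\]
using $\gcd(p,p-1)=1$. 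An identical computation yields $\nu_p(3^k-1) = \nu_p(3^{p-1}-1) + \nu_p(k)$.

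\textbf{Step 2: Use the $q$-th power condition.} Summing the two, $\nu_p(y^q) = q\,\nu_p(y)$ equals
\[
2\nu_p(k) + \nu_p(2^{p-1}-1) + \nu_p(3^{p-1}-1).
\]
This quantity is strictly positive (both base terms are $\ge 1$), hence is a positive multiple of $q$, so it is $\ge q$. Rearranging,
\[
2\nu_p(k) \;\ge\; q - \nu_p(2^{p-1}-1) - \nu_p(3^{p-1}-1).
\]

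\textbf{Step 3: Bound the base terms by size.} The trivial inequality $p^{\nu_p(N)}\le N$ applied to $N = 2^{p-1}-1 < 2^{p-1}$ gives $\nu_p(2^{p-1}-1) < (p-1)\log 2/\log p$, and analogously $\nu_p(3^{p-1}-1) < (p-1)\log 3/\log p$. Adding these,
\[
\nu_p(2^{p-1}-1) + \nu_p(3^{p-1}-1) \;<\; \frac{\log 6}{\log p}\,(p-1),
\]
and substitution into the estimate from Step 2 yields the advertised strict bound
\[
\nu_p(k) \;>\; \tfrac{1}{2}\!\left(q - \tfrac{\log 6}{\log p}(p-1)\right).
\]

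\textbf{Step 4: The ``in particular'' clause.} Assuming additionally $p\le q$, the right-hand side above is $\ge \tfrac{1}{2}(p - (p-1)\log 6/\log p)$. This is positive precisely when $p\log p > (p-1)\log 6$, i.e.\ $p^p > 6^{p-1}$, which I verify directly at $p=11$ (e.g.\ $11^{11}\approx 2.85\cdot 10^{11}$ versus $6^{10}\approx 6.05\cdot 10^{7}$) and note that $p\log p - (p-1)\log 6$ is increasing in $p$ for $p\ge 11$, so the inequality holds for all such primes. Since $\nu_p(k)$ is a nonnegative integer strictly greater than something positive, $\nu_p(k)\ge 1$, i.e.\ $p\mid k$.

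\textbf{Main obstacle.} No single step is deep; the only place I would be cautious is confirming strict inequality at the right places (to get ``$>$'' rather than ``$\ge$'' in the conclusion) and checking that $\nu_p((2^k-1)(3^k-1))$ really is a strictly positive multiple of $q$, which is guaranteed because both factors are divisible by $p$ once $(p-1)\mid k$ and $p\nmid 6$.
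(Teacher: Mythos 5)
Your proof is correct and follows essentially the same route as the paper: LTE applied to $(2^{p-1})^{k/(p-1)}-1$ and $(3^{p-1})^{k/(p-1)}-1$, the size bound $p^{\nu_p(N)}\le N$ on the base terms, and the observation that $\nu_p(y^q)\ge q$. Your Step 4 verifies positivity of the lower bound directly at $p=11$ with a monotonicity remark, whereas the paper bounds $\frac{\log 6}{\log p}(p-1)\le\frac{\log 6}{\log 11}(q-1)<\frac34(q-1)$ to get $\nu_p(k)>\frac{q}{8}+\frac38>1$; both suffice for $p\mid k$.
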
 

\begin{proof}
  Let us start at the analyzing of \( p \)-adic valuation on both sides of equation \eqref{theorem}. We assumed \( p - 1 \) divides \( k \). Then by Fermat's Little Theorem, it follows that \( p \) divides both \( a^{p-1} - 1 \) and \( a^k - 1 \) for any integer \( a \) satisfing \( p \nmid a \). By applying Lemma \ref{Lifting-the-exponent Lemma}, we obtain
\[
\nu_p\left(2^k - 1\right) = \nu_p\left(2^{p-1} - 1\right) + \nu_p\left(\frac{k}{p-1}\right) = \nu_p\left(2^{p-1} - 1\right) + \nu_p(k),
\]
and similarly,
\[
\nu_p\left(3^k - 1\right) = \nu_p\left(3^{p-1} - 1\right) + \nu_p(k).
\]
Next, we observe that
\[
\nu_p\left(2^{p-1} - 1\right) < \log 2 \cdot \frac{p - 1}{\log p}, \quad \nu_p\left(3^{p-1} - 1\right) < \log 3 \cdot \frac{p - 1}{\log p}.
\]
Thus, we conclude
\[
\nu_p\left(\left(2^k - 1\right)\left(3^k - 1\right)\right) < \log 6 \cdot \frac{p - 1}{\log p} + 2 \nu_p(k).
\]
On the right-hand side, since \( p \mid y \), we have
\[
\nu_p\left(y^q\right) \geq q.
\]
Combining the inequalities derived above, we obtain
\[
q < \log 6 \cdot \frac{p - 1}{\log p} + 2 \nu_p(k).
\]

Assume that $p\le q$ now. We have 
$$
\nu_{p}(k)>\frac{1}{2}\left(q- \frac{\log 6}{\log p} \cdot (p-1)\right) \ge \frac{1}{2}\left(q- \frac{\log 6}{\log 11} \cdot (q-1)\right)$$
$$
>\frac{1}{2}\left(q - \frac{3}{4}(q-1)\right) = \frac{q}{8}+ \frac{3}{8} >1.
$$
Which completes the proof of the lemma.
\end{proof}

\begin{theorem}\label{thm:2}
Assume that \( (k, y, q) \) is a positive integer solution to equation \eqref{theorem}. Then, it follows that \( q \mid k \).   
\end{theorem}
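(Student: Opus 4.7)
The plan is to reduce the statement to Lemma~\ref{key lemma} applied with $p = q$: once I establish $(q-1) \mid k$, that lemma immediately yields $q \mid k$. For the small cases $q \in \{3, 5, 7\}$, Lemma~\ref{lemma k}(2) taken with $p = q$ already gives $\nu_q(k) \geq q - 1 \geq 1$, so the conclusion holds directly. I therefore assume $q \geq 11$ from here on.

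To obtain $(q-1) \mid k$, I will prove by strong induction on primes $p \leq q$ the estimate $\nu_p(k) \geq \log_p q$. This is exactly what is needed to reassemble divisibility by any integer at most $q$ from divisibility by its prime-power factors, because for any such integer $m$ and any prime $p'$ one has $\nu_{p'}(m) \leq \log_{p'} m \leq \log_{p'} q$.

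The base cases $p \in \{2, 3, 5, 7\}$ are immediate from Lemma~\ref{lemma k}, which in fact supplies the far stronger estimate $\nu_p(k) \geq q - 2$. For the inductive step at a prime $p$ with $11 \leq p \leq q$, I note that every prime divisor $p'$ of $p-1$ satisfies $p' < p \leq q$, so by the induction hypothesis (together with Lemma~\ref{lemma k} when $p' \leq 7$) one has $\nu_{p'}(k) \geq \log_{p'} q > \log_{p'}(p-1) \geq \nu_{p'}(p-1)$. Collecting these divisibilities gives $(p-1) \mid k$, and Lemma~\ref{key lemma} then delivers $\nu_p(k) > \tfrac{q}{8} + \tfrac{3}{8}$, which exceeds $\log_p q$ because $p \geq 11$ and $q \geq 11$.

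With the induction complete, the same reassembly one final time yields $(q-1) \mid k$, after which Lemma~\ref{key lemma} with $p = q$ produces $q \mid k$. The one quantitative point that needs care — and the real technical obstacle — is checking that the lower bound $\tfrac{q}{8} + \tfrac{3}{8}$ coming out of Lemma~\ref{key lemma} genuinely dominates the target $\log_p q$ throughout the recursion; this is unproblematic since $\log_p q \leq \log_{11} q$ grows only logarithmically while $q/8$ is linear in $q$, but it is the reason that the hypothesis $p \geq 11$ in Lemma~\ref{key lemma} is exactly what makes the bootstrap close up.
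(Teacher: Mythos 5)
Your proof is correct and follows essentially the same route as the paper: an induction over the primes $p \le q$, with Lemma~\ref{lemma k} supplying the base cases and Lemma~\ref{key lemma} driving the step once $(p-1)\mid k$ has been reassembled from prime-power divisibilities. The only difference is organizational: your uniform invariant $\nu_p(k) \ge \log_p q$ lets the single bound $\nu_p(k) > \tfrac{q+3}{8}$ (extracted from Lemma~\ref{key lemma} with $11 \le p \le q$) absorb what the paper handles by splitting into the subcases $r > q^{1/2}$ and $r < q^{1/2}$.
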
 

\begin{proof}
    If \( q \in \{3, 5, 7\}\), the result follows directly from Lemma~\ref{lemma k}. Now, let us assume \( q \geq 11 \). Define \( P = \{ 2, 3, 5, 7, p_1, p_2, \ldots, p_n \} \) as the set of all primes less than or equal to \( q \),  where \( p_1 = 11 < p_2 < \cdots < p_n = q \).
 To prove the result, we need to show that \( (p - 1) \mid k \) for each \( p \in P \) using Lemma~\ref{key lemma}.

Now, assume that for some \( i \), we have \( (p_i-1) \mid k \). Our goal is to show that \( (p_{i+1}-1) \mid k \). For \( p = p_{i+1} \), it suffices to establish that
\begin{equation}\label{2}
    \nu_r(p - 1) \leq \nu_r(k)
\end{equation}
for all \( r = 2, 3, 5, 7, p_1, \ldots, p_i \).

We begin by considering the case where \( r = 2, 3, 5, 7\). By applying Lemma~\ref{lemma k} and utilizing the results previously derived, we find that
\[
\nu_r(p - 1) \leq \frac{\log (p - 1)}{\log r} \le \frac{\log (q-1)}{\log 2}  \le q - 2 \leq \nu_r(k).
\]

Next, consider the case where \( r \geq 11 \). We analyze two subcases based on the relationship between \( r \) and \( q^{1/2} \).

$\bullet$ If \( r > q^{1/2} \), it follows that
   \[
   \nu_r(p - 1) \le \frac{\log(p-1)}{\log r} < \frac{\log q}{\log r} < \frac{\log r^2}{\log r} = 2.
   \]
   Indeed, from Lemma \ref{key lemma}, we deduce that $r \mid k$.
   Thus, we conclude
   \[
   \nu_r(p - 1) \leq 1 \leq \nu_r(k).
   \]

$\bullet$ If \( r < q^{1/2} \), we claim that
   \[
   \frac{1}{2} \left(q - \frac{\log 6}{\log r} \cdot (r - 1)\right) \geq \frac{\log q}{\log r}.
   \]
   To prove this, suppose the contrary
   \[
 q - \frac{\log 6}{\log r} \cdot (r - 1)< \frac{2 \log q}{\log r}.
   \]
   Since \( q \geq 11 \), this would imply
   \[
   q < 2 \log 6 \cdot \frac{q^{1/2} - 1}{\log q} + \frac{2 \log q}{\log 11},
   \]
   a contradiction. We also have $\nu_r(k) > \frac{\log q}{ \log r} > \nu_r(p-1)$ by Lemma~\ref{key lemma}.

Therefore, by establishing inequality (\ref{2}), we have shown that \( (p_{i+1} - 1) \mid k \). Consequently, by applying Lemma~\ref{key lemma}, we deduce \( p_{i+1} \mid k \). Thus, the induction is complete, proving that each prime in the set \( P \) divides \( k \), including \( q \). This completes the proof.

\end{proof}
\newpage
We now prove the Theorem 1.
\begin{proof}

Let \( (k, y, q) \) be a positive integer solution to equation \eqref{theorem}. By Theorem~\ref{thm:2}, we have \( q \mid k \). Define that
\[
(X, Y, Z) := \left(2^{k / q}, 3^{k / q}, y\right).
\]
Then, \( (X, Y, Z) \) forms a positive integer solution to the Diophantine equation
\[
\left(X^q - 1\right)\left(Y^q - 1\right) = Z^q, \quad 0 < X < Y. 
\]
Let
\[
a:= X^q-1=2^k-1, \quad b:= Y^q-1=3^k-1, \quad ab = Z^q.
\]
There exists a positive integer \( t \) such that
\[
XY = Z + t.
\]
Expanding the expression
\[
\left((ab)^{1 / q} + t\right)^q = (a + 1)(b + 1),
\]
we obtain
\begin{equation}\label{eq:last}
q (ab)^{(q-1) / q} t + \binom{q}{2} (ab)^{(q-2) / q} t^2 + \cdots + t^q = a + b + 1.
\end{equation}
With $k\ge q \ge 3$, one can check that $a<b<a^2$ by the fact 
\[
b = 3^k-1 < 4^k - 2^{k+1} +1 =(2^k-1)^2 =a^2.
\] 
Then we get 
\[
q(ab)^{(q-1)/q} t \ge 3(ab)^{2/3} >3b > a + b +1. 
\]
This contradicts to \eqref{eq:last}. Therefore, no such positive integer solution exists, which completes the proof.
 \end{proof} 

\section*{Acknowledgements} The authors express their gratitude to Prof. Preda Mih\u{a}ilescu for his help during the early preparation of this paper. The authors also thank Yoshinosuke Hirakawa and the anonymous referee for the constructive suggestions to improve an earlier draft of this paper. The first author was supported by Natural Science Foundation of China (Grant No. 12161001). The second author acknowledges the support of the China Scholarship Council program (Project ID: 202106310023). 
 
\end{document}